\title{The Big-Line-Big-Clique Conjecture\\ is False for Infinite Point  Sets}
\subjclass{52C10 Erd\H{o}s problems and related topics of   discrete geometry, 05D10 Ramsey theory}
\author{Attila~P\'or}
\address{\begin{minipage}{8cm}
Department of Mathematics\newline 
Western Kentucky University\newline
Bowling Green,  Kentucky, U.S.A.\\[-1ex]
\end{minipage}}
\email{attila.por@wku.edu}
\author{David~R.~Wood}
\address{\begin{minipage}{8cm}
Department of Mathematics and Statistics\newline  
The University of Melbourne\newline 
Melbourne, Australia\\[-1ex]
\end{minipage}}
\email{woodd@unimelb.edu.au}
\thanks{David Wood is supported by a QEII Research Fellowship from the Australian Research Council.}
\theoremstyle{plain}
\newtheorem{theorem}{Theorem}
\begin{document}
\maketitle

Let $P$ be a finite set of points in the plane. Two distinct points
$v$ and $w$ in the plane are \emph{visible} with respect to $P$ if no
point in $P$ is in the open line segment
$\overline{vw}$. \citet{KPW-DCG05} made the following Ramsey-theoretic
conjecture, which has recently received considerable attention
\citep{KPW-DCG05,Luigi,EmptyPentagon-GC,ABBCLPW,PorWood-JoCG,Matousek09}. 

\medskip\noindent{\bf Big-Line-Big-Clique Conjecture}
\citep{KPW-DCG05} For all $k\geq2$ and $\ell\geq2$ there is
an integer $n$ such that every finite set of at least $n$
points in the plane:
\begin{itemize}
\item contains $\ell$ collinear points, or
\item contains $k$ pairwise visible points.
\end{itemize}

\medskip
This conjecture is true for $k\leq 5$ or $\ell\leq3$
\citep{KPW-DCG05,Luigi,EmptyPentagon-GC}, and is open for $k=6$ or
$\ell=4$. Note that the natural approach for attacking this conjecture
using extremal graph theory fails \citep{PorWood-JoCG}. Another natural
approach for attacking the Big-Line-Big-Clique Conjecture is to follow
an infinitary compactness argument (which can be used to establish
many results in Ramsey theory). The purpose of this note is to show
that this conjecture is false for infinite point sets, which
suggests that an infinitary compactness argument cannot work.

\begin{theorem}
  There is a countably infinite point set with no 4 collinear points
  and no 3 pairwise visible points.
\end{theorem}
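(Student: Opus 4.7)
The plan is to build $P$ as an increasing union $\bigcup_{n\geq 1} P_n$ of finite sets, each satisfying (A) no 4 collinear and (B) no 3 pairwise visible with respect to itself, and then argue both properties descend to $P$. Property (A) is finitary: any four collinear points of $P$ already lie in some $P_n$. For (B), the key observation is that visibility is monotone non-increasing under enlargement of the point set, since adding points can only create new blockers, never destroy existing ones. Hence three pairwise visible points of $P$ would already be pairwise visible in any $P_n$ containing them, contradicting (B) for that $P_n$.

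The inductive step is to enlarge $P_n$ to $P_{n+1}$ while preserving both invariants. I would try to add a new collinear triple $\{a,b,c\}$ on a new line $\ell$, with $b$ between $a$ and $c$. Choosing $\ell$ generically---so as to avoid every line determined by two points of $P_n$---ensures that no 4 points of $P_{n+1}$ are collinear, since $\ell$ contributes only the three new points, and every other line carries at most three old points plus at most one new. Internal to $\{a,b,c\}$ no triangle can form, because $b$ blocks the extreme pair. What must be controlled is that no visible triangle spans new and old points together.

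The main obstacle lies in placing $a$, $b$, $c$ on $\ell$ so that no such mixed triangle arises. For each visible pair $(p,q)\in P_n$ and each new point $x\in\{a,b,c\}$, a triangle $\{x,p,q\}$ appears in $P_{n+1}$ exactly when $x$ is visible in $P_{n+1}$ to both $p$ and $q$ and $(p,q)$ remains visible in $P_{n+1}$. A quick analysis shows that on $\ell$ the positions blocked from $p$ (or from $q$) by some old point form only a finite set (the intersections of $\ell$ with the rays from the old blockers), so generically most of $\ell$ is visible to both $p$ and $q$. Thus a naive placement is doomed. The construction must deliberately exploit the option of placing some of $a$, $b$, $c$ on pre-existing visible segments of $P_n$, thereby blocking those segments and eliminating the associated triangle danger all at once.

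The technical crux is therefore to show that, given any $P_n$ satisfying (A) and (B), one can always find a line $\ell$ and three positions on $\ell$ that simultaneously (i) avoid all lines through pairs of points of $P_n$ in the relevant sense, (ii) block enough visible pairs of $P_n$ that no triangle involving new points survives, and (iii) keep the overall point count growing. Verifying that the set of admissible configurations remains nonempty at every stage---ideally by a generic/measure-zero argument in the space of choices of $\ell$ and of the three positions on $\ell$---is where the real work of the proof is concentrated. Once this is done, $P = \bigcup_n P_n$ is a countably infinite set with the required properties.
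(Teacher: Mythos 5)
Your reduction is sound as far as it goes: property (A) is indeed finitary, and your monotonicity argument for (B) is correct---if $x,y,z\in P$ were pairwise visible with respect to $P$, they would a fortiori be pairwise visible with respect to any $P_n$ containing them, since $P_n\subseteq P$ offers fewer potential blockers. The fatal problem is that the inductive invariant you are trying to maintain is unsatisfiable: no finite point set that is not entirely collinear can avoid containing 3 pairwise visible points. Among all non-collinear triples of $P_n$, choose one, say $\{a,b,c\}$, spanning a triangle of minimum (positive) area. If some $d\in P_n$ lay in the open segment $\overline{ab}$, then $\{a,d,c\}$ would be a non-collinear triple of strictly smaller area (same height from $c$, shorter base), a contradiction; likewise for the other two sides. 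Hence $a,b,c$ are pairwise visible with respect to $P_n$. Since your sets must grow and contain no 4 collinear points, they cannot remain collinear, so already $P_2$ (two triples on two lines) violates (B). This is precisely why the conjecture is known to hold for $k=3$ (indeed for $k\le 5$), as the introduction notes, and it is the whole point of the theorem: the infinite counterexample cannot be assembled as an increasing union of finite counterexamples, because finite counterexamples do not exist. The difficulty you flag at the end---verifying that the set of admissible configurations is nonempty at each stage---is not a technical gap to be filled but a provably empty set.

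The paper's construction therefore works by a different mechanism. It adds one point at a time, and the intermediate finite sets contain visible triangles at every stage; what is guaranteed is that every segment joining two earlier points eventually acquires a blocker (the Sylvester--Gallai theorem always supplies a line through exactly two of the current points on which to insert the next one), while careful bookkeeping---each new point lies in exactly one collinear triple among the points present so far, and it is inserted between the two earlier-indexed endpoints---ensures that the pairs remaining visible forever cannot close up into a triangle. The property ``no 3 pairwise visible'' is achieved only in the limit, never at a finite stage. Any repair of your argument must abandon the requirement that each $P_n$ itself satisfy (B) and instead track, as the paper does, which pairs will be blocked at a later stage.
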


\begin{proof}
Let $x_1,x_2,x_3$ be three non-collinear points in the plane. 
  Given points $x_1,\dots,x_{n-1}$, define $x_n$ as follows.  By the Sylvester-Gallai
  theorem, there is a line through exactly two of
  $x_1,\dots,x_{n-1}$. Choose such a line
  $\overleftrightarrow{x_ix_j}$ with $i<j$ such that $j$ is minimum
  and then $i$ is minimum. Insert $x_n$ on 
  $\overline{x_ix_j}$, such that $\{x_i,x_n,x_j\}$ is the only
  collinear triple that contains  $x_n$. This is possible, since there are only
  finitely  many ($\leq\binom{n-3}{2}$) excluded locations for $x_n$.  

Repeat this  step to obtain a point set $\{x_i:i\in\mathbb{N}\}$,
which by construction,  contains no 4 collinear points. Moreover, if
$x_i$ and $x_k$ are visible with
  $i<k$, then $x_i$ and $x_k$ are collinear with some other point
  $x_{i'}$ (otherwise some point would be added at a later
  stage in $\overline{x_ix_k}$). Since $i<k$ we have 
$x_k\in\overline{x_ix_{i'}}$ and $i'<k$.  

Suppose on the contrary that three points $x_i,x_j,x_k$ are
  pairwise visible, where $i<j<k$. As proved above,
  $x_k\in\overline{x_ix_{i'}}$ and $x_k\in\overline{x_jx_{j'}}$, where
  $i',j'<k$. Since $x_k$ is in only one collinear
  triple amongst $x_1,\dots,x_k$, we have $i'=j$ and $j'=i$. Thus $x_i,x_k,x_j$ are
  collinear, and $x_i$ and $x_j$ are not visible. This contradiction
  proves that no 3 points are pairwise visible.
\end{proof}


\def\cprime{$'$} \def\soft#1{\leavevmode\setbox0=\hbox{h}\dimen7=\ht0\advance
  \dimen7 by-1ex\relax\if t#1\relax\rlap{\raise.6\dimen7
  \hbox{\kern.3ex\char'47}}#1\relax\else\if T#1\relax
  \rlap{\raise.5\dimen7\hbox{\kern1.3ex\char'47}}#1\relax \else\if
  d#1\relax\rlap{\raise.5\dimen7\hbox{\kern.9ex \char'47}}#1\relax\else\if
  D#1\relax\rlap{\raise.5\dimen7 \hbox{\kern1.4ex\char'47}}#1\relax\else\if
  l#1\relax \rlap{\raise.5\dimen7\hbox{\kern.4ex\char'47}}#1\relax \else\if
  L#1\relax\rlap{\raise.5\dimen7\hbox{\kern.7ex
  \char'47}}#1\relax\else\message{accent \string\soft \space #1 not
  defined!}#1\relax\fi\fi\fi\fi\fi\fi} \def\Dbar{\leavevmode\lower.6ex\hbox to
  0pt{\hskip-.23ex\accent"16\hss}D}

\bigskip

\end{document}